\newcommand{\textbfit}[1]{\textbf{\textit{#1}}}
\theoremstyle{definition}
\newtheorem{rmk}[equation]{Remark}
\newtheorem*{ack*}{Acknowledgments}
\newcommand{\andeq}{\text{\qquad and\qquad}}
\newcommand{\period}{\rlap{\ .}}
\newcommand{\comma}{\rlap{\ ,}}
\newcommand{\defn}[1]{\textbfit{#1}}
\newcommand{\flowerstar}{f_{\ast}}
\newcommand{\fupperstar}{f^{\ast}}
\newcommand{\pupperstar}{p^{\ast}}
\newcommand{\Nis}{\ensuremath{\textit{nis}}}
\newcommand{\proet}{\ensuremath{\textit{proét}}}
\newcommand{\zar}{\ensuremath{\textit{zar}}}
\newcommand{\bdd}{\ensuremath{\textit{b}}}
\newcommand{\qc}{\ensuremath{\textit{qc}}}
\newcommand{\coh}{\ensuremath{\textit{coh}}}
\newcommand{\bc}{\ensuremath{\textit{bc}}}
\newcommand{\pre}{\ensuremath{\textit{pre}}}
\newcommand{\cross}{\times}
\newcommand{\equivalent}{\simeq}
\newcommand{\paren}[1]{\left(#1\right)}
\renewcommand{\rhd}{\smalltriangleright}
\newcommand{\Sh}{\categ{Sh}}
\newcommand{\Sheff}[1]{\Sh_{\eff}(#1)}
\newcommand{\preTopord}{\pre\Top}
\newcommand{\preTop}{\pre\Top_{\infty}}
\newcommand{\preTopbdd}{\preTop^{\bdd}}
\newcommand{\Topcohord}{\Top^{\coh}}
\newcommand{\Topcoh}{\Top_{\infty}^{\coh}}
\newcommand{\Topcohbdd}{\Top_{\infty}^{\bc}}
\newcommand{\Topbc}{\Topcohbdd}
\renewcommand{\Space}{\categ{Spc}}
\newcommand{\XXcoh}{\XX^{\coh}}
\newcommand{\YYcoh}{\YY^{\coh}}
\newcommand{\XXcohbdd}{\XXcoh_{<\infty}}
\title{On coherent topoi \& coherent $ 1 $-localic $ \infty $-topoi}
\author{Peter J. Haine}
\date{\today}
\begin{document}

\maketitle

\begin{abstract} 
	In this note we prove the following useful fact that seems to be missing from the literature: the $ \infty $-category of coherent ordinary topoi is equivalent to the $ \infty $-category of coherent $ 1 $-localic $ \infty $-topoi.
	We also collect a number of examples of coherent geometric morphisms between $ \infty $-topoi coming from algebraic geometry. 
\end{abstract}

\setcounter{tocdepth}{2}
\tableofcontents

%-------------------------------------------------------------------%
%-------------------------------------------------------------------%
%  Overview                                                         %
%-------------------------------------------------------------------%
%-------------------------------------------------------------------%

\section*{Overview}\addcontentsline{toc}{section}{Overview}

Let $ f \colon \fromto{X}{Y} $ be a morphism between quasicompact quasiseparated schemes.
It follows from \cite[Example 7.1.7]{Ultracategories} that the induced geometric morphism 
\begin{equation*}
	\flowerstar \colon \fromto{\Sh_{\proet}(X;\Set)}{\Sh_{\proet}(Y;\Set)}
\end{equation*}
on proétale topoi is a coherent geometric morphism between coherent topoi in the sense of \cite[Exposé VI]{MR50:7131}.
It is often helpful to be able to apply methods of homotopy theory to topos theory, especially if one needs to work with stacks.
To do this, one works with the $ 1 $-localic $ \infty $-topos associated to an ordinary topos, obtained by taking sheaves of \textit{spaces} rather than sheaves of sets.
There is again an induced geometric morphism 
\begin{equation*}
	\flowerstar \colon \fromto{\Sh_{\proet}(X;\Space)}{\Sh_{\proet}(Y;\Space)} \comma
\end{equation*}
and these $ \infty $-topoi are coherent in the sense of \cite[\SAGapp{A}]{SAG}.
One naturally expects this geometric morphism to satisfy the same kinds of good finiteness conditions as the morphism of ordinary topoi does, i.e., be \textit{coherent} in the sense of \cite[\SAGapp{A}]{SAG}.
However, a proof of this fact is not currently in the literature.
This claim is not completely obvious either: from the perspective of higher topos theory, the pullback in a coherent geometric morphisms of ordinary topoi is only required to preserve $ 0 $-truncated coherent objects, rather than \textit{all} coherent objects.

In this note we fill this small gap in the literature.
We show that the theories of coherent ordinary topoi and coherent geometric morphisms (in the sense of \cite[Exposé VI]{MR50:7131}) and of coherent $ 1 $-localic $ \infty $-topoi and coherent geometric morphisms (in the sense of \cite[\SAGapp{A}]{SAG}) are equivalent (\Cref{prop:coherent1localic}).
This point is surely known to experts, but does not seem to be explicitly addressed in \cite[\SAGapp{A}]{SAG} or elsewhere.
Our main aim in proving this equivalence is to make the $ \infty $-categorical version of sheaf theory more accessible to (non-derived) algebraic geometers who are interested in applying results from \cite[\SAGapp{A}]{SAG} to ordinary coherent topoi.

The proof of this equivalence reduces to showing that a coherent geometric morphism of ordinary coherent topoi induces a coherent geometric morphism of corresponding $ 1 $-localic $ \infty $-topoi.
This follows from the more general fact that a morphism of finitary $ \infty $-sites induces a coherent gometric morphism on corresponding $ \infty $-topoi (\Cref{cor:morsitescoherent}).
In ordinary topos theory this is well-known \cite[Exposé VI, Corollaire 3.3]{MR50:7131}, but the $ \infty $-toposic version seems to be missing from the literature.

Our original motivation for proving \Cref{prop:coherent1localic} was the following.
In recent work with Barwick and Glasman \cite{exodromy} we proved a basechange theorem for oriented fiber product squares of bounded coherent $ \infty $-topoi \cite[Theorem 8.1.4]{exodromy}. 
In the original version of \cite{exodromy}, we claimed \cite[Corollary 8.1.6]{exodromy} that this implies the basechange theorem for oriented fiber products of coherent topoi of Moerdijk and Vermeulen \cite[Theorem 2(i)]{MR1731050} (which is the nonabelian refinement of a result of Gabber \cite[Exposé XI, Théorème 2.4]{MR3309086}).
While this is true, our original proof implicitly used that a coherent geometric morphism of ordinary topoi induces a coherent geometric morphism on corresponding $ 1 $-localic $ \infty $-topoi.
 
In \cref{sec:review} we review the classification of coherent topoi in terms of pretopoi as well as the classification of bounded coherent $ \infty $-topoi in terms of bounded $ \infty $-pretopoi.
This review is aimed at readers familiar with \cite[Exposé VI]{MR50:7131}, but not necessarily with pretopoi or coherent $ \infty $-topoi; the familiar reader should skip straight to \cref{sec:cohfor1localic}.
At the end of \cref{sec:cohfor1localic} we collect a number of examples of coherent geometric morphisms between $ \infty $-topoi coming from algebraic geometry.

\begin{ack*}
	We thank Clark Barwick for his guidance and sharing his many insights about this material.
	We also gratefully acknowledge support from both the \textsc{mit} Dean of Science Fellowship and \textsc{nsf} Graduate Research Fellowship.
\end{ack*}

%-------------------------------------------------------------------%
%  Terminology & notations                                          %
%-------------------------------------------------------------------%

\subsection{Terminology \& notations}

\begin{itemize}
	\item We write $ \NN $ for the poset of \textit{nonnegative integers}, and $ \NN^{\rhd} \coloneq \NN \cup \{\infty \} $.

	\item We write $ \Cat_{\infty} $ for the $ \infty $-category of $ \infty $-categories.
	% and $ \Space \subset \Cat_{\infty} $ for the full subcategory spanned by the $ \infty $-groupoids, i.e., the $ \infty $-category of spaces.

	\item We write $ \Top_{\infty} \subset \Cat_{\infty} $ for the $ \infty $-category of $ \infty $-topoi and geometric morphisms.
	We typically write $ \flowerstar \colon \fromto{\XX}{\YY} $ to denote a geometric morphism from an $ \infty $-topos $ \XX $ to an $ \infty $-topos $ \YY $ and write $ \fupperstar $ for the left exact left adjoint of $ \flowerstar $.

	\item We write $ \Cat $ for the $ (2,1) $-category of (ordinary) categories, functors, and natural isomorphisms, which we tacitly regard as an $ \infty $-category (via the Duskin nerve \kerodon{009P}).
	We write $ \Top \subset \Cat $ for the subcategory of topoi and geometric morphisms. 
\end{itemize}

%-------------------------------------------------------------------%
%-------------------------------------------------------------------%
%  Premilinaries on coherent topoi & pretopoi                       %
%-------------------------------------------------------------------%
%-------------------------------------------------------------------%

\section{Premilinaries on (higher) coherent topoi \& pretopoi}\label{sec:review}

In this section we review the classification of coherent topoi in terms of pretopoi, as well as the theory of coherent $ \infty $-topoi and the classification of \textit{bounded} coherent $ \infty $-topoi in terms of \textit{bounded} $ \infty $-pretopoi.

%-------------------------------------------------------------------%
%  Classification of coherent topoi                                 %
%-------------------------------------------------------------------%

\subsection{Classification of coherent topoi}\label{subsec:coh1topoi}

We assume that the reader is familiar with coherent topoi in the sense of \cite[Exposé VI]{MR50:7131}.
Excellent accounts of coherent topoi can also be found in \cites{Lurie:CatLogic11}[\S\S C.5 \& C.6]{Ultracategories}.
The classification of coherent topoi in terms of pretopoi is sketched in \cite[Exposé VI, Exercise 3.11]{MR50:7131}; a self-contained account can be found in \cite{Lurie:CatLogic13}.
% See also \cite[\S D.3.3]{MR2063092}.

\begin{dfn}\label{def:qcqsin1topos}
	Let $ \XX $ be a topos.
	\begin{enumerate}[(\ref*{def:qcqsin1topos}.1)]
		\item An object $ U \in \XX $ is \defn{quasicompact} if every covering of $ U $ has a finite subcovering.

		\item An object $ U \in \XX $ is \defn{quasiseparated} if for every pair of morphisms $ \fromto{U'}{U} $ and $ \fromto{U''}{U} $ where $ U' $ and $ U'' $ are quasicompact, the fiber product $ U' \cross_U U'' $ is quasicompact.

		\item An object $ U \in \XX $ is \defn{coherent} if $ U $ is quasicompact and quasiseparated.

		\item The topos $ \XX $ is \defn{coherent} if the terminal object $ 1_{\XX} \in \XX $ is coherent, every object of $ \XX $ admits a cover by coherent objects, and the coherent objects of $ \XX $ are closed under finite products.
	\end{enumerate}
	We write $ \XXcoh \subset \XX $ for the full subcategory spanned by the coherent objects.

	A geometric morphism of topoi $ \flowerstar \colon\fromto{\XX}{\YY} $ is \defn{coherent} if and only if, for every coherent object $ F \in \YY $, the object $ \fupperstar(F) \in \XX $ is coherent. 
	We write $\Topcohord $ for the subcategory of $\Top$ whose objects are coherent topoi and whose morphisms are coherent geometric morphisms.
\end{dfn}

% \begin{dfn}\label{def:coherent1topos}
% 	A topos $ \XX $ is \defn{coherent} if there exists a collection of coherent objects $ \XX_0 \subset \XXcoh  $ satisfying the following conditions:
% 	\begin{enumerate}[(\ref*{def:coherent1topos}.1)]
% 		\item Every object of $ \XX $ admits a covering by objects of $ \XX_0 $.

% 		\item The collection $ \XX_0 $ is closed under finite products.
% 		In particular, $ \XX_0 $ contains the terminal object of $ \XX $.
% 	\end{enumerate}
% \end{dfn}

% \begin{exm}
% 	The topos $ \Set $ of sets is coherent.
% 	A set $ U $ is a coherent object of $ \Set $ if and only if $ U $ finite.
% \end{exm}

\begin{dfn}[{\cite[Definition A.4.1]{Ultracategories}}]\label{def:1pretopos}
	A category $ X $ is a \defn{pretopos} if $ X $ satisfies the following conditions:
	\begin{enumerate}[(\ref*{def:1pretopos}.1)]
		\item The category $ X $ admits finite limits.

		\item The category $ X $ admits finite coproducts, which are universal and disjoint.

		\item Equivalence relations in $ X $ are effective.

		\item Effective epimorphisms in $ X $ are stable under pullback
	\end{enumerate}
	
	If $ X $ and $ Y $ are pretopoi, we say that a functor $ \fupperstar \colon \fromto{Y}{X} $ is a \defn{morphism of pretopoi} if $ \fupperstar $ preserves finite limits, finite coproducts, and effective epimorphisms.
	Write $ \preTopord \subset \Cat $ for the subcategory consisting of \textit{essentially small} pretopoi and morphisms of pretopoi.
\end{dfn}

\begin{exm}[{\cite[Corollary C.5.14]{Ultracategories}}]
	Let $ \XX $ be a coherent topos.
	Then the full subcategory $ \XXcoh \subset \XX $ of coherent objects is an essentially small pretopos.
	If $ \flowerstar \colon \fromto{\XX}{\YY} $ is a coherent geometric morphism of coherent topoi, then the functor $ \fupperstar \colon \fromto{\YYcoh}{\XXcoh} $ is a morphism of pretopoi.

	If $ \XX $ is the étale topos of a quasicompact quasiseparated scheme $ X $, then $ \XX $ is coherent and $ \XXcoh $ is the category of constructible étale sheaves of sets on $ X $.
\end{exm}

\begin{dfn}[{\cite[Definition B.5.3]{Ultracategories}}]
	Let $ X $ be a pretopos.
	The \defn{effective epimorphism topology} on $ X $ is the Grothendieck topology $ \eff $ on $ X $ where a collection of morphisms $ \{U_i \to U\}_{i \in I} $ is a covering if and only if there exists a finite subset $ I_0 \subset I $ such that the induced morphism $ \fromto{\coprod_{i \in I_0} U_i}{U} $ is an effective epimorphism in $ X $.

	The effective epimorphism topology is subcanonical \cite[Corollary B.5.6]{Ultracategories}.
\end{dfn}

% \begin{prp}[{\cite[Proposition C.6.3]{Ultracategories}}]
% 	Let $ X $ be an essentially small pretopos.
% 	Then the topos $ \Sheff{X;\Set} $ of sheaves of sets on $ X $ with respect to the effective epimorphism topology is coherent.

% 	Moreover, if $ \fupperstar \colon \fromto{Y}{X} $ is a morphism of essentially small pretopoi, then the induced geometric morphism
% 	\begin{equation*}
% 		\flowerstar \colon \fromto{\Sheff{X;\Set}}{\Sheff{Y;\Set}}
% 	\end{equation*}
% 	is coherent.
% \end{prp}

\begin{thm}[{\cites[Corollary 7]{Lurie:CatLogic13}[Proposition C.6.3]{Ultracategories}}]\label{thm:classcoherent1topoi}
	The constructions $\goesto{\XX}{\XXcoh}$ and $\goesto{X}{\Sheff{X;\Set}}$ are mutually inverse equivalences of $ (2,1) $-categories
	\begin{equation*}
		\Topcohord \equivalent \preTopord^{\op} \period
	\end{equation*}
\end{thm}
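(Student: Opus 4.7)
The plan is to check that the two constructions $\XX \mapsto \XXcoh$ and $X \mapsto \Sheff{X;\Set}$ are well-defined on objects, extend naturally to morphisms and natural isomorphisms, and exhibit the unit and counit as equivalences of $(2,1)$-categories. Well-definedness on objects in one direction is already recorded in the example preceding the definition of the effective epimorphism topology, so what first needs to be checked is that $\Sheff{X;\Set}$ is a coherent topos whenever $X$ is an essentially small pretopos.

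To that end, I would first observe that the effective epimorphism topology on a pretopos $X$ is \emph{finitary}: by construction every cover admits a finite refinement. Because the topology is subcanonical, the Yoneda embedding $y \colon X \hookrightarrow \Sheff{X;\Set}$ is fully faithful, and its image consists of objects that are quasicompact (covers reduce to finite ones by construction of the topology) and quasiseparated (fiber products in $X$ remain representable, hence quasicompact). Thus $y X$ lies inside the subcategory of coherent objects of $\Sheff{X;\Set}$.

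The main obstacle will be the reverse inclusion: showing that every coherent object of $\Sheff{X;\Set}$ is representable, so that $X \simeq (\Sheff{X;\Set})^{\coh}$. The plan is to display any coherent sheaf $F$ as the coequalizer of an effective equivalence relation $R \rightrightarrows U$ on a representable, and then invoke effectiveness of equivalence relations in $X$ to realize this coequalizer by an object of $X$. Quasicompactness of $F$ yields a finite effective-epi cover $U \to F$ with $U$ a finite coproduct of representables (representable because pretopoi are closed under finite coproducts), and the kernel pair $U \times_F U$ is coherent by quasiseparatedness of $F$. The delicate substep is to identify $U \times_F U$ as representable; this is the heart of the pretopos-completion argument and is done by exhibiting it as a subobject of $U \times U$ cut out by the pretopos-theoretic operations that the Yoneda embedding preserves. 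The opposite reconstruction $\XX \simeq \Sheff{\XXcoh;\Set}$ for $\XX$ coherent follows because $\XXcoh$ is a dense generating subcategory of $\XX$ whose induced topology is precisely the effective epi topology, so that restricted Yoneda $\XX \to \Sh(\XXcoh;\Set)$ lands in the sheaf subcategory and is an equivalence.

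Finally, for the $(2,1)$-categorical enhancement: a coherent geometric morphism $\flowerstar \colon \XX \to \YY$ restricts, by definition, to a functor $\fupperstar \colon \YYcoh \to \XXcoh$ that preserves finite limits (since $\fupperstar$ is left exact), finite coproducts, and effective epimorphisms (since $\fupperstar$ is colimit-preserving), yielding a pretopos morphism. Conversely, a pretopos morphism $Y \to X$ extends by left Kan extension along Yoneda to a colimit-preserving left-exact functor $\Sheff{Y;\Set} \to \Sheff{X;\Set}$, with left exactness coming from finitarity of the topology together with left exactness on representables; this provides the inverse-image part of the associated geometric morphism. Natural isomorphisms transport through Yoneda in both directions, and mutual inverseness at the $(2,1)$-categorical level reduces to the object-level bijection already established.
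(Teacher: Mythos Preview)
The paper does not give its own proof of this theorem: it is recorded as a known classification result, with citations to Lurie's categorical logic notes and \textit{Ultracategories}, and is used as background in the preliminaries section. So there is no in-paper argument to compare against, and your outline is essentially the standard one found in those references.

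That said, your sketch has one soft spot worth flagging. You correctly isolate the crux as showing that the kernel pair $R = U \times_F U$ is representable, but the phrase ``cut out by the pretopos-theoretic operations that the Yoneda embedding preserves'' does not yet name the mechanism. What actually happens is: quasiseparatedness of $F$ makes $R$ quasicompact, so there is an effective epimorphism $V \twoheadrightarrow R$ with $V$ representable; the composite $V \to R \hookrightarrow U \times U$ is then a map between representables whose image in the topos is $R$. Since the Yoneda embedding $X \hookrightarrow \Sheff{X;\Set}$ preserves finite limits and sends effective epimorphisms in $X$ to effective epimorphisms of sheaves (this is exactly what the effective epimorphism topology is designed to do), it preserves images; and the image of $V \to U \times U$ computed in $X$ exists because $X$ is a pretopos (take the quotient of $V$ by the representable equivalence relation $V \times_{U \times U} V$). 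Without this two-step reduction, your argument is circular: you would be invoking representability of the coherent object $R$ in the course of proving that coherent objects are representable.

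The rest of your plan---density of $\XXcoh$ in $\XX$, left Kan extension for the inverse construction, and transport of $2$-cells through Yoneda---is correct as stated.
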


\begin{rmk}
	The equivalence of \Cref{thm:classcoherent1topoi} is really an equivalence of $ (2,2) $-categories, but we do not need noninvertible $ 2 $-morphisms in this note.
\end{rmk}

%-------------------------------------------------------------------%
%  Classification of bounded coherent ∞-topoi                       %
%-------------------------------------------------------------------%

\subsection{Classification of bounded coherent \texorpdfstring{$\infty$}{∞}-topoi}\label{subsec:cohtopoi}

Coherent $ \infty $-topoi admit a classification in terms of a higher-categorical analogue of pretopoi, as long as they can be recovered from the collection of their $ n $-topoi of $ (n-1) $-truncated objects.
This subsection is a breif summary of \cite[\S\S \SAGseclink{A.2}, \SAGseclink{A.3}, \SAGseclink{A.6}, \& \SAGseclink{A.7}]{SAG}.

\begin{ntn}
	We use here the theory of \textit{$ n $-topoi} for $n\in\NN^{\rhd}$; see \cite[\HTTch{6}]{HTT}.
	We write $\Top_n\subset\Cat_{\infty}$ for the subcategory of $ n $-topoi and geometric morphisms.
\end{ntn}

\begin{exm}
	Recall that $1$-topoi are topoi in the classical sense \HTT{Remark}{6.4.1.3}. 
\end{exm}

\begin{exm}
	Let $ m,n \in\NN^{\rhd} $ with $ m\leq n $.
	An \defn{$ m $-site} is a small $ m $-category\footnote{By an \defn{$ m $-category} we mean an $ \infty $-category whose mapping spaces are $ (m-1) $-truncated.} $ X $ equipped with a Grothendieck topology $ \tau $.
	Attached to this $ m $-site is the $ n $-topos $\Sh_{\tau,\leq(n-1)}(X)$ of sheaves of $ (n-1) $-truncated spaces on $ X $.
	We simply write $ \Sh_{\tau}(X) $ for the $ \infty $-topos of sheaves of spaces on $ X $.

	Not all $ \infty $-topoi are of the form $\Sh_{\tau}(X)$ for some $ \infty $-site $X$; however, if $n\in\NN$, then every $ n $-topos is of the form $\Sh_{\tau,\leq(n-1)}(X)$ for some $ n $-site $(X,\tau)$ \cite[\href{http://www.math.harvard.edu/~lurie/papers/HTT.pdf\#theorem.6.4.1.5}{Theorem 6.4.1.5(1)}]{HTT}.
\end{exm}

% \begin{ntn}
% 	Let $ n\in\NN^{\rhd} $ and let $ \XX $ and $ \YY $ be $ n $-topoi.
% 	We write
% 	\begin{equation*}
% 		\Funlowerstar(\XX,\YY) \subset \Fun(\XX,\YY)
% 	\end{equation*}
% 	for the full subcategory spanned by the geometric morphisms. 
% \end{ntn}
 
\begin{dfn}[{\cite[\HTTsubsec{6.4.5}]{HTT}}]\label{cnstr:localictopoi}
	For any integer $ n \geq 0 $, passage to $ (n-1) $-truncated objects defines a functor $\tau_{\leq n-1}\colon\fromto{\Top_{\infty}}{\Top_n} $.
	The functor $ \tau_{\leq  n-1} $ admits admits a fully faithful right adjoint $ \incto{\Top_n}{\Top_{\infty}} $ whose essential image we denote by $ \Top_{\infty}^n \subset \Top_{\infty} $.
	The $ \infty $-category $ \Top_{\infty}^n $ is the $ \infty $-category of \defn{$ n $-localic} $ \infty $-topoi.
	% An $ \infty $-topos $ \XX $ is in $ \Top_{\infty}^n $ if and only if for every $ \infty $-topos $ \YY $, the functor
	% \begin{equation*}
	% 	\fromto{\Fun_{\ast}(\YY,\XX)}{\Fun_{\ast}(\tau_{\leq n-1}\YY,\tau_{\leq n-1}\XX)}
	% \end{equation*}
	% is an equivalence.
	% We call such $ \infty $-topoi \defn{$ n $-localic}.
\end{dfn}

\begin{exm}
	For any topological space $ T $, the $ \infty $-topos $ \Sh(T) $ of sheaves on $ T $ is $ 0 $-localic.
\end{exm}

\begin{exm}
	If $ \XX $ is a topos presented as sheaves of sets on a site $ (X,\tau) $ \textit{with finite limits}, then the $ 1 $-localic $ \infty $-topos associated to $ \XX $ is the $ \infty $-topos $ \Sh_{\tau}(X) $ of sheaves of \textit{spaces} on $ (X,\tau) $.
\end{exm}

\begin{nul}
	Let $ n \in \NN $.
	The proof of \HTT{Proposition}{6.4.5.9} demonstrates that an $ \infty $-topos $ \XX $ is $ n $-localic if and only if $\XX \equivalent \Sh_{\tau}(X)$ for some $ n $-site $ (X,\tau) $ \textit{with finite limits}.
\end{nul}

\begin{wrn}
	If $ (X,\tau) $ is an $ n $-site and the $ n $-category $ X $ does \textit{not} have finite limits, then the $ \infty $-topos $ \Sh_{\tau}(X) $ is not generally $ N $-localic for any integer $ N \geq 0 $.
	See \SAG{Counterexample}{20.4.0.1} for a basis $ B $ for the topology on the Hilbert cube $ \prod_{i \in \ZZ} [0,1] $ for which the $ \infty $-topos of sheaves on $ B $ is not $ N $-localic for any $ N \geq 0 $.
\end{wrn}

% \begin{ntn}
% 	For an $ \infty $-topos $ \XX $, write $ \XX_{<\infty} \subset \XX $ for the full subcategory spanned by the truncated objects.
% \end{ntn}

\begin{dfn}[\SAG{Definition}{A.7.1.2}]\label{def:boundedtopoi}
	An $ \infty $-topos $ \XX $ is \defn{bounded} if $ \XX $ can be written as the limit of a diagram $ \YY \colon \fromto{I}{\Top_{\infty}} $ where $ I^{\op} $ is a filtered $ \infty $-category and for each $ i \in I $ the $ \infty $-topos $ \YY_i $ is $ n_i $ localic for some $ n_i \in \NN $.
\end{dfn}

\begin{dfn}[\SAG{Definition}{A.2.0.12}]\label{def:coherence}
	Let $ \XX $ be an $ \infty $-topos.
	We say that $ \XX $ is \defn{$ 0 $-coherent} or \defn{quasicompact} if and only if every cover $ \{ U_i \to 1_{\XX}\}_{i \in I} $ of the terminal object $ 1_{\XX} \in \XX $ admits a finite subcover.
	Let $ n \geq 1 $ be an integer, and define $ n $-coherence of $ \infty $-topoi and their objects recursively as follows:
	\begin{enumerate}[(\ref*{def:coherence}.1)]
		\item An object $U\in \XX$ is \defn{$ n $-coherent} if and only if the $ \infty $-topos $\XX_{/U}$ is $ n $-coherent.

		\item The $ \infty $-topos $ \XX $ is \defn{locally $ n $-coherent} if and only if every object $U\in \XX$ admits a cover $ \{\fromto{U_i}{U}\}_{i\in I} $ where each $ U_i $ is $ n $-coherent.
		
		\item The $ \infty $-topos $ \XX $ is \defn{$(n+1)$-coherent} if and only if $ \XX $ is locally $ n $-coherent, and the $ n $-coherent objects of $ \XX $ are closed under finite products.
	\end{enumerate}

	% In particular, if $ \XX $ is locally $ n $-coherent, then $U\in \XX$ is $(n+1)$-coherent if and only if $ U $ is $ n $-coherent and for any pair $U', U'' \in \XX_{/U}$ of $ n $-coherent objects, the fibre product $ U' \cross_U U'' $ is $ n $-coherent.

	An $ \infty $-topos $ \XX $ is \defn{coherent} if and only if $ \XX $ is $ n $-coherent for every $ n \geq 0 $.
	An object $U$ of an $ \infty $-topos $ \XX $ is \defn{coherent} if and only if $\XX_{/U}$ is a coherent $ \infty $-topos.
	Finally, an $ \infty $-topos $ \XX $ is \defn{locally coherent} if and only if every object $U\in \XX$ admits a cover $\{\fromto{U_i}{U}\}_{i\in I}$ where each $ U_i $ is coherent.
\end{dfn}

\begin{dfn}\label{def:cohgeommor}
	A geometric morphism of $ \infty $-topoi $ \flowerstar \colon\fromto{\XX}{\YY} $ is \defn{coherent} if and only if, for every coherent object $ F \in \YY $, the object $ \fupperstar(F) \in \XX $ is coherent. 
	We write $\Topcoh$ for the subcategory of $\Top_{\infty}$ whose objects are coherent $ \infty $-topoi and whose morphisms are coherent geometric morphisms.

	Write $\Topbc \subset \Topcoh $ for the full subcategory spanned by those coherent $ \infty $-topoi that are also bounded, that is, the \defn{bounded coherent} $ \infty $-topoi
\end{dfn}

% \begin{exm}[\SAG{Proposition}{A.7.5.1}]\label{exm:boundedcohisloccoh}
% 	If $ \XX $ is a bounded coherent $ \infty $-topos, then $ \XX $ is also locally coherent.
% \end{exm}

\begin{ntn} 
	If $ \XX $ is an $ \infty $-topos, then write $ \XXcoh \subset \XX $ for the full subcategory of $ \XX $ spanned by the coherent objects and $ \XXcohbdd \subset \XX $ for the full subcategory of $ \XX $ spanned by the truncated coherent objects.
\end{ntn}

\begin{exm}
	The $ \infty $-topos $ \Space $ of spaces is coherent. 
	An object $ U \in \Space $ is truncated coherent if and only if $ U $ is a \textit{$ \pi $-finite} space, i.e., $ U $ is truncated, has finitely many connected components, and all of the homotopy groups of $ U $ are finite.
\end{exm}

\begin{dfn}[\SAG{Definition}{A.3.1.1}]\label{dfn:finitaryinftysite}
	An $ \infty $-site $(X,\tau)$ is \defn{finitary} if and only if $ X $ admits all fiber products, and, for every object $ U \in X $ and every covering sieve $ S \subset X_{/U} $, there is a finite subset $\{U_i\}_{i\in I} \subset S $ that generates a covering sieve.

	Let $ (X,\tau_X) $ and $ (Y,\tau_Y) $ be finitary $ \infty $-sites.
	A morphism of $ \infty $-sites $ \fupperstar \colon \fromto{(Y,\tau_Y)}{(X,\tau_X)} $ is a \defn{morphism of finitary $ \infty $-sites} if $ \fupperstar $ is preserves fiber products. 
\end{dfn}

\begin{prp}[\SAG{Proposition}{A.3.1.3}]\label{prop:SAG.A.3.1.3}
	Let $(X,\tau)$ be a finitary $ \infty $-site.
	Then the $ \infty $-topos $\Sh_{\tau}(X)$ locally coherent, and for every object $x\in X$, the sheaf $ \yo(x)$ is a coherent object of $\Sh_{\tau}(X)$, where $ \yo \colon \fromto{X}{\Sh_{\tau}(X)} $ is the sheafified Yoneda embedding.
	If, in addition, $ X $ admits a terminal object, then $ \Sh_{\tau}(X) $ is coherent.
\end{prp}

\begin{dfn}[\SAG{Definition}{A.6.1.1}]\label{def:pretopos}
	An $ \infty $-category $ X $ is an \defn{$ \infty $-pretopos} if $ X $ satisfies the following conditions:
	\begin{enumerate}[(\ref*{def:pretopos}.1)]
		\item The category $ X $ admits finite limits.

		\item The category $ X $ admits finite coproducts, which are universal and disjoint.

		\item Groupoid objects in $ X $ are effective, and their geometric realizations are universal.
	\end{enumerate}

	If $ X $ and $ Y $ are $ \infty $-pretopoi, we say that a functor $ \fupperstar \colon \fromto{Y}{X} $ is a \defn{morphism of $ \infty $-pretopoi} if $ \fupperstar $ preserves finite limits, finite coproducts, and effective epimorphisms.
	We write $ \preTop \subset \Cat_{\infty} $ for the subcategory consisting of $\infty $-pretopoi and morphisms of $ \infty $-pretopoi.
\end{dfn}

\begin{exm}[\SAG{Corollary}{A.6.1.7}]
	If $ \XX $ is a coherent $\infty $-topos, then the full subcategory $\XXcoh\subset\XX$ spanned by the coherent objects is an $\infty $-pretopos.
\end{exm}

\begin{dfn}[\SAG{Definition}{A.6.2.4}]\label{def:effepi}
	Let $ X $ be an $ \infty $-pretopos.
	The \defn{effective epimorphism topology} on $ X $ is the Grothendieck topology $ \eff $ where a collection of morphisms $ \{U_i \to U\}_{i \in I} $ is a covering if and only if there exists a finite subset $ I_0 \subset I $ such that the induced morphism $ \fromto{\coprod_{i \in I_0} U_i}{U} $ is an effective epimorphism in $ X $.
	
	The effective epimorphism topology is finitary and subcanonical \SAG{Corollary}{A.6.2.6}.
\end{dfn}

\begin{dfn}[\SAG{Definition}{A.7.4.1}]\label{dfn:boundedpretopos}
	An $\infty $-pretopos $ X $ is \defn{bounded} if and only if $ X $ is essentially small and every object of $ X $ is truncated.
	We write $\preTopbdd \subset \preTop$ for the full subcategory spanned by the bounded $\infty $-pretopoi.
\end{dfn}

% \begin{exm}[\SAG{Example}{A.7.4.4}]
% 	Let $ \XX $ be a coherent $ \infty $-topos.
% 	Then $ \XXcohbdd \subset \XX $ is a bounded $ \infty $-pretopos.
% 	If $ \flowerstar \colon \fromto{\XX}{\YY} $ is a coherent geometric morphism of coherent $ \infty $-topoi, then the functor $ \fupperstar \colon \fromto{\YYcohbdd}{\XXcohbdd} $ is a morphism of $ \infty $-pretopoi.
% \end{exm}

\begin{thm}[\SAG{Theorem}{A.7.5.3}]\label{thm:SAG.A.7.5.3}
	The constructions $\goesto{\XX}{\XXcohbdd}$ and $\goesto{X}{\Sheff{X}}$ are mutually inverse equivalences of $\infty $-categories
	\begin{equation*}
		\Topbc \simeq \preTop^{\bdd,\op} \period
	\end{equation*}
\end{thm}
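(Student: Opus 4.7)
The plan is to verify both constructions are well-defined functors landing in the correct $\infty$-categories, exhibit natural comparison maps between the identity and the two composites, and then show these comparisons are equivalences by reducing the delicate half to a level-by-level Giraud-style reconstruction.

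First I would check that $(-)^{\coh}_{<\infty}$ sends bounded coherent $\infty$-topoi to bounded $\infty$-pretopoi and that $\Sheff{-}$ sends bounded $\infty$-pretopoi to bounded coherent $\infty$-topoi. The first assertion combines the quoted fact that coherent objects of a coherent $\infty$-topos form an $\infty$-pretopos with the evident closure of truncatedness under the pretopos operations. For the second, coherence of $\Sheff{X}$ is immediate from \Cref{prop:SAG.A.3.1.3} since $(X,\eff)$ is a finitary site with terminal object; boundedness is seen by exhibiting $\Sheff{X}$ as the cofiltered limit of its $n$-localic truncations $\tau_{\leq n-1} \Sheff{X}$, which arise as $n$-topoi of sheaves on the underlying $n$-subcategories of $X$. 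Functoriality in both directions is then routine, using the universal property of sheaves to convert a pretopos morphism into a geometric morphism.

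Next I would construct the comparison natural transformations. For a bounded $\infty$-pretopos $X$, the sheafified Yoneda embedding $\yo \colon X \to \Sheff{X}$ lands in $(\Sheff{X})^{\coh}_{<\infty}$ since representables are coherent by \Cref{prop:SAG.A.3.1.3} and truncated because mapping spaces in $X$ are truncated; this gives a pretopos morphism $X \to (\Sheff{X})^{\coh}_{<\infty}$. For a bounded coherent $\infty$-topos $\XX$, the inclusion $\XXcohbdd \hookrightarrow \XX$ is a morphism of sites from $(\XXcohbdd,\eff)$ to $\XX$ equipped with its canonical topology, so by the universal property of $\Sheff{-}$ it extends to a geometric morphism $\XX \to \Sheff{\XXcohbdd}$. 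Showing $X \to (\Sheff{X})^{\coh}_{<\infty}$ is an equivalence is the easier direction: fully faithfulness is subcanonicity of the effective epimorphism topology, and essential surjectivity reduces, by induction on truncation level, to showing that every truncated coherent sheaf is an effective quotient of a finite coproduct of representables, which follows from the $\infty$-pretopos axioms applied to a presentation by representables.

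The main obstacle is showing $\XX \to \Sheff{\XXcohbdd}$ is an equivalence for every bounded coherent $\XX$, and this is where boundedness is indispensable. By \Cref{def:boundedtopoi} we may write $\XX$ as a cofiltered limit of $n$-localic coherent $\infty$-topoi. The strategy is to establish two compatibilities: (a) the assignments $(-)^{\coh}_{<\infty}$ and $\Sheff{-}$ are both compatible with the Postnikov tower on the topos side and the truncation-level filtration on the pretopos side, and (b) at each finite level $n$, the comparison reduces to a Giraud-style classification of coherent $n$-localic $\infty$-topoi by coherent $n$-pretopoi, extending \Cref{thm:classcoherent1topoi} from $n=1$ to general $n$ by the same pretopos-theoretic argument adapted to higher truncation. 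The hard part is precisely this $n$-categorical reconstruction and the verification that the Postnikov presentations on both sides assemble compatibly; assuming it, the levelwise equivalences pass to the cofiltered limit to give the desired equivalence of $\infty$-categories.
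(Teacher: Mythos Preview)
The paper does not supply its own proof of this theorem. The statement is labelled with the citation \SAG{Theorem}{A.7.5.3}, meaning it is quoted as an external result from Lurie's \textit{Spectral Algebraic Geometry}; it appears in \cref{subsec:cohtopoi} purely as background, alongside the analogous classical statement \Cref{thm:classcoherent1topoi}, and is later invoked as a black box when constructing the functor $(-)^{+}$. There is therefore no proof in the paper against which to compare your proposal.

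That said, your outline is broadly in the spirit of Lurie's argument in \cite[\SAGsubsec{A.7.5}]{SAG}: one direction is indeed governed by subcanonicity and \Cref{prop:SAG.A.3.1.3}, and the harder direction does proceed by reducing to a levelwise statement via the bounded hypothesis. The place where your sketch is thinnest is exactly where you flag it: the ``$n$-categorical reconstruction'' in step~(b) is the entire content of the theorem at each level, and simply asserting that the pretopos-theoretic argument for $n=1$ ``adapts to higher truncation'' hides all of the work. If you want to turn this into a self-contained proof rather than a plausibility argument, that is the step you would need to develop in detail.
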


%-------------------------------------------------------------------%
%-------------------------------------------------------------------%
%  Coherence for 1-localic ∞-topoi                                  %
%-------------------------------------------------------------------%
%-------------------------------------------------------------------%

\section{Coherence for \texorpdfstring{$1$}{1}-localic \texorpdfstring{$\infty$}{∞}-topoi}\label{sec:cohfor1localic}

In this section we show that the $ \infty $-category of coherent ordinary topoi is equivalent to the $ \infty $-category of coherent $ 1 $-localic $ \infty $-topoi (\Cref{prop:coherent1localic}).
% This reduces to showing that a coherent geometric morphism of coherent topoi induces a coherent geometric morphism of corresponding $ 1 $-localic $ \infty $-topoi. 
This follows from the fact that morphisms of finitary $ \infty $-sites induce coherent geometric morphisms (\Cref{cor:morsitescoherent}).
First we'll have to give $ \infty $-toposic versions of a number of points from \cite[Exposé VI, \S\S 1--3]{MR50:7131}, which follow easily from \cite[\SAGsubsec{A.2.1}]{SAG}.

\begin{dfn}\label{def:relativecoh}
	Let $ n \in \NN $ and let $ \XX $ be a locally $ n $-coherent $ \infty $-topos. 
	A morphism $ \fromto{U}{V} $ in $ \XX $ is \defn{relatively $ n $-coherent} if for every $ n $-coherent object $ V' \in \XX $ and every morphism $ \fromto{V'}{V} $, the fiber product $ U \cross_V V' $ is also $ n $-coherent.
\end{dfn}

\begin{exm}[\SAG{Example}{A.2.1.2}]\label{exm:SAG.A.2.1.2}
	Let $ \XX $ be a locally $ n $-coherent $ \infty $-topos and $ f \colon \fromto{U}{V} $ a morphism in $ \XX $.
	If $ U $ is $ n $-coherent and $ V $ is $ (n+1) $-coherent, then $ f $ is relatively $ n $-coherent.
\end{exm}
	
\begin{lem}\label{lem:quotientofqcisqc}
	Let $ \XX $ be an $ \infty $-topos.
	If $ e \colon \surjto{U}{V} $ is an effective epimorphism in $ \XX $ and $ U $ is quasicompact, then $ V $ is quasicompact.
\end{lem}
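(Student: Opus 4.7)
The plan is to translate the hypothesis that $U$ is quasicompact into a statement about covers of the terminal object of $\XX_{/U}$, then transport finite subcovers along the pullback functor $e^{\ast} \colon \XX_{/V} \to \XX_{/U}$.

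Suppose $\{V_i \to V\}_{i \in I}$ is a cover of $V$, i.e., the induced map $\coprod_{i \in I} V_i \to V$ is an effective epimorphism in $\XX$. For each $i$, set $U_i \coloneq U \times_V V_i$. Since coproducts in an $\infty$-topos are universal and effective epimorphisms are stable under pullback, the induced map $\coprod_{i \in I} U_i \to U$ is an effective epimorphism; that is, $\{U_i \to U\}_{i \in I}$ is a cover of $U$. Because $U$ is quasicompact, there is a finite subset $I_0 \subset I$ such that $\coprod_{i \in I_0} U_i \to U$ is already an effective epimorphism.

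Now write $U_0 \coloneq \coprod_{i \in I_0} U_i$ and $V_0 \coloneq \coprod_{i \in I_0} V_i$. Composing the effective epimorphism $U_0 \to U$ with $e \colon U \to V$ produces an effective epimorphism $U_0 \to V$, and this composite factors through $V_0 \to V$ (via the coproduct of the pullback maps $U_i \to V_i$). Since effective epimorphisms satisfy right-cancellation (if $g \circ f$ is an effective epimorphism, so is $g$), it follows that $V_0 \to V$ is an effective epimorphism. Hence $\{V_i \to V\}_{i \in I_0}$ is a finite subcover, and $V$ is quasicompact.

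The only step that requires any toposic input is the combination of universality of colimits with stability of effective epimorphisms under pullback, used to produce the cover $\{U_i \to U\}$, together with the right-cancellation property. Both are standard features of $\infty$-topoi, so I expect no real obstacle; the proof is essentially the same as in the $1$-categorical setting (cf.\ \cite[Exposé VI, §1]{MR50:7131}).
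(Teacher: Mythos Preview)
Your argument is correct and complete; the ingredients you cite (stability of effective epimorphisms under pullback, universality of coproducts, closure of effective epimorphisms under composition, and right cancellation) are all standard in $\infty$-topoi.

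The paper's proof is simply a one-line reference to \SAG{Proposition}{A.2.1.3}, which is a more general inductive statement about $n$-coherence being inherited along effective epimorphisms; the present lemma is the base case $n=0$. So your route is more elementary and self-contained, while the paper's citation situates the lemma inside the same framework that is invoked again (for higher $n$) in the proof of \Cref{lem:relativecoh}. Either approach is fine here; there is no gap in yours.
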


\begin{proof}
	This is a special case of \SAG{Proposition}{A.2.1.3}.
\end{proof}

\begin{lem}\label{lem:relativecoh}
	Let $ n \geq 1 $ be an integer and $ \XX $ a locally $ (n-1) $-coherent $ \infty $-topos.
	Let $ U \in \XX $ and let $ e \colon \surjto{\coprod_{i \in I} U_i}{U} $ be a cover of $ U $ where $ I $ is finite and $ U_i $ is $ n $-coherent for each $ i \in I $.
	The following are equivalent: 
	\begin{enumerate}[{\upshape (\ref*{lem:relativecoh}.1)}]
		\item The effective epimorphism $ e $ is relatively $ (n-1) $-coherent.

		\item For all $ i,j \in I $, the object $ U_i \cross_{U} U_j $ is $ (n-1) $-coherent.

		\item The object $ U $ is $ n $-coherent.
	\end{enumerate}
\end{lem}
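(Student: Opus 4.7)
The plan is to prove the cycle $(3)\Rightarrow(1)\Rightarrow(2)\Rightarrow(3)$; the first two implications are short formal consequences of results already cited, while the third is the substantive step.

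For $(3)\Rightarrow(1)$, I observe that each $U_i$ is $ n $-coherent hence $(n-1)$-coherent, and finite coproducts of $(n-1)$-coherent objects in a locally $(n-1)$-coherent $\infty$-topos are $(n-1)$-coherent (a standard manipulation, cf.\ \cite[\S A.2.1]{SAG}), so the source $\coprod_{i\in I} U_i$ of $e$ is $(n-1)$-coherent. Since the target $U$ is $n$-coherent by hypothesis, \Cref{exm:SAG.A.2.1.2} (applied with its $n$ replaced by $n-1$) yields that $e$ is relatively $(n-1)$-coherent. For $(1)\Rightarrow(2)$, each $U_j$ is $(n-1)$-coherent, so relative $(n-1)$-coherence of $e$ implies that the pullback
\[
\bigl(\textstyle\coprod_{i\in I} U_i\bigr) \times_U U_j \simeq \coprod_{i\in I}(U_i\times_U U_j)
\]
is $(n-1)$-coherent; because coproducts in an $\infty$-topos are disjoint, each $U_i\times_U U_j$ is a retract (indeed, a coproduct summand) of this $(n-1)$-coherent object and is therefore itself $(n-1)$-coherent.

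For $(2)\Rightarrow(3)$, I propose to realize $U$ as the geometric realization of the Čech nerve of the effective epimorphism $e$, whose $k$-th level is the finite coproduct $\coprod_{(i_0,\ldots,i_k)\in I^{k+1}} U_{i_0}\times_U\cdots\times_U U_{i_k}$. By induction on $k$, each iterated fiber product $U_{i_0}\times_U\cdots\times_U U_{i_k}$ is $(n-1)$-coherent: the base cases $k=0$ and $k=1$ follow, respectively, from the $n$-coherence of each $U_i$ and from hypothesis $(2)$, while the inductive step uses the identification
\[
U_{i_0}\times_U\cdots\times_U U_{i_k} \simeq \bigl(U_{i_0}\times_U\cdots\times_U U_{i_{k-1}}\bigr)\times_{U_{i_0}}\bigl(U_{i_0}\times_U U_{i_k}\bigr)
\]
to display the iterated product as a fiber product of two $(n-1)$-coherent objects inside the $n$-coherent slice $\XX_{/U_{i_0}}$, whose $(n-1)$-coherent objects are closed under fiber products. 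The Čech nerve of $e$ is thus level-wise a finite coproduct of $(n-1)$-coherent objects, and invoking the descent machinery of \cite[\S A.2.1]{SAG} that promotes level-wise $(n-1)$-coherence of a Čech nerve to $n$-coherence of its geometric realization, we conclude that $U$ is $n$-coherent.

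The main obstacle is the cleanness of this last descent step in $(2)\Rightarrow(3)$: identifying (or extracting) the precise result in \cite[\SAGapp{A}]{SAG} that packages ``the geometric realization of a simplicial diagram of $(n-1)$-coherent objects arising as the Čech nerve of an effective epimorphism is $n$-coherent'' into a single invocable statement. The argument is implicit in the architecture of \cite[\SAGapp{A}]{SAG}, but isolating it without a short \emph{ad hoc} proof is the delicate technical point; the remaining bookkeeping — coproducts, retracts, and fiber products in slices — is routine.
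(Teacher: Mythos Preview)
Your implications $(3)\Rightarrow(1)$ and $(1)\Rightarrow(2)$ are fine and match the paper. The gap is exactly where you flag it: there is no single statement in \cite[\SAGapp{A}]{SAG} of the form ``level-wise $(n-1)$-coherence of a Čech nerve implies $n$-coherence of the realization,'' and your induction establishing $(n-1)$-coherence of \emph{all} iterated fiber products $U_{i_0}\times_U\cdots\times_U U_{i_k}$ does not by itself produce the needed conclusion. The result you are groping for is the combination of \SAG{Corollary}{A.2.1.5} and \SAG{Proposition}{A.2.1.3}, and once you invoke those you see that only the first two Čech levels matter.

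The paper avoids $(2)\Rightarrow(3)$ entirely and instead closes the loop via $(2)\Rightarrow(1)\Rightarrow(3)$. For $(2)\Rightarrow(1)$: the assumption gives that $\coprod_{i,j} U_i\times_U U_j$ is $(n-1)$-coherent, and since $\coprod_i U_i$ is $n$-coherent, \Cref{exm:SAG.A.2.1.2} shows that the pullback of $e$ along itself is relatively $(n-1)$-coherent; then \SAG{Corollary}{A.2.1.5} (descent for relative coherence along an effective epimorphism) gives that $e$ itself is relatively $(n-1)$-coherent. For $(1)\Rightarrow(3)$ one simply applies \SAG{Proposition}{A.2.1.3}. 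This route uses only the double fiber products $U_i\times_U U_j$, so your inductive computation of the higher Čech levels, while correct, is unnecessary; and the ``descent machinery'' you could not isolate is precisely Corollary A.2.1.5 applied once.
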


\begin{proof}
	If $ e $ is relatively $ (n-1) $-coherent, then since coproducts in $ \XX $ are universal, the fiber product
	\begin{equation*}
		\paren{\textstyle \coprod_{i \in I} U_i} \cross_U \paren{\textstyle \coprod_{j \in I} U_j} \equivalent \coprod_{i,j \in I} U_i \cross_{U} U_j
	\end{equation*}
	is $ (n-1) $-coherent.
	Thus $ U_i \cross_{U} U_{j} $ is $ (n-1) $-coherent for all $ i,j \in I $ \SAG{Remark}{A.2.0.16}.

	If each $ U_i \cross_{U} U_j $ is $ (n-1) $-coherent, then since each $ U_i $ is $ n $-coherent the pullback of $ e $ along itself
	\begin{equation*}
		\surjto{\coprod_{i,j\in I} U_i \cross_{U} U_j}{\coprod_{i \in I} U_i}
	\end{equation*}
	is relatively $ (n-1) $-coherent (\Cref{exm:SAG.A.2.1.2}).
	Applying \SAG{Corollary}{A.2.1.5} we deduce that $ e \colon \surjto{\coprod_{i \in I} U_i}{U} $ is relatively $ (n-1) $-coherent.

	To conclude, note that if $ e \colon \surjto{\coprod_{i \in I} U_i}{U} $ is relatively $ (n-1) $-coherent, then \SAG{Proposition}{A.2.1.3} shows that $ U $ is $ n $-coherent.
	On the other hand, if $ U $ is $ n $-coherent, then $ e $ is $ (n-1) $-coherent by \Cref{exm:SAG.A.2.1.2}.
\end{proof}

\begin{prp}\label{prop:gencoherenceofgeom}
	Let $ \flowerstar \colon \fromto{\XX}{\YY} $ be a geometric morphism of $ \infty $-topoi and $ n \in \NN $.
	Assume that:
	\begin{enumerate}[{\upshape (\ref*{prop:gencoherenceofgeom}.1)}]
		\item There exists a collection of $ n $-coherent objects $ \YY_0 \subset \Obj(\YY) $ of $ \YY $ such that for every $ n $-coherent object $ U \in \YY $ there exists a cover $ \surjto{\coprod_{i \in I} U_i}{U} $ where $ U_i \in \YY_0 $ for each $ i \in I $.

		\item The pullback functor $ \fupperstar \colon \fromto{\YY}{\XX} $ takes objects of $ \YY_0 $ to $ n $-coherent objects of $ \XX $.

		\item If $ n \geq 1 $, the $ \infty $-topoi $ \XX $ and $ \YY $ are locally $ (n-1) $-coherent and $ \fupperstar \colon \fromto{\YY}{\XX} $ takes $ (n-1) $-coherent objects of $ \YY $ to $ (n-1) $-coherent objects of $ \XX $.
	\end{enumerate}
	Then $ \fupperstar $ takes $ n $-coherent objects of $ \YY $ to $ n $-coherent objects of $ \XX $.
\end{prp}

\begin{proof}
	Let $ U \in \YY $ be an $ n $-coherent object; we show that $ \fupperstar(U) $ is $ n $-coherent.
	By assumption there exists a cover
	\begin{equation*}
		e \colon \surjto{\coprod_{i \in I} U_i}{U}
	\end{equation*}
	where $ U_i \in \YY_0 $ for each $ i \in I $ and $ I $ is finite (since $ U $ is, in particular, $ 0 $-coherent).
	For all $ i \in I $ the object $ \fupperstar(U_i) $ is $ n $-coherent by assumption, so since $ n $-coherent objects are closed under finite coproducts \SAG{Remark}{A.2.0.16}, the object
	\begin{equation*}
		\fupperstar\paren{\textstyle \coprod_{i\in I} U_i} \equivalent \coprod_{i \in I} \fupperstar(U_i)
	\end{equation*}
	is $ n $-coherent.

	Note that 
	\begin{equation*}
		\fupperstar(e) \colon \surjto{\coprod_{i \in I} \fupperstar(U_i)}{\fupperstar(U)}
	\end{equation*}
	is an effective epimorphism in $ \XX $.
	If $ n = 0 $, this proves the claim (\Cref{lem:quotientofqcisqc}).
	If $ n \geq 1 $, then \Cref{lem:relativecoh} shows that it suffices to show that for all $ i ,j \in I $, the object 
	\begin{equation*}
		\fupperstar(U_i) \cross_{\fupperstar(U)} \fupperstar(U_j) \equivalent \fupperstar(U_i \cross_{U} U_j)
	\end{equation*}
	is $ (n-1) $-coherent.
	This follows from the fact that $ U_i \cross_{U} U_j $ is $ (n-1) $-coherent (by \Cref{lem:relativecoh}) and the assumption that $ \fupperstar $ sends $ (n-1) $-coherent objects of $ \YY $ to $ (n-1) $-coherent objects of $ \XX $.
\end{proof}

\Cref{prop:gencoherenceofgeom} shows that coherence of a geometric morphism  between locally coherent $ \infty $-topoi (\Cref{def:cohgeommor}) is equivalent to the \textit{a priori} stronger condition that the pullback functor preserve $ n $-coherent objects for all $ n \geq 0 $:\footnote{This second notion is how Grothendieck and Verdier originally defined coherence for ordinary topoi \cite[Exposé VI, Définition 3.1]{MR50:7131}.}

\begin{cor}
	Let $ \flowerstar \colon \fromto{\XX}{\YY} $ be a geometric morphism between locally coherent $ \infty $-topoi.
	Then $ \flowerstar $ is coherent if and only if $ \fupperstar $ takes $ n $-coherent objects of $ \YY $ to $ n $-coherent objects of $ \XX $ for all $ n \geq 0 $.
\end{cor}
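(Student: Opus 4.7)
My approach is to separate the two directions; the ``if'' direction is immediate from unpacking \Cref{def:coherence}, while the ``only if'' direction requires an induction on $n$ built on \Cref{prop:gencoherenceofgeom}.

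First I would dispatch the easy direction. By definition, an object is coherent precisely when it is $n$-coherent for every $n \geq 0$, so if $\fupperstar$ preserves $n$-coherent objects at every level it in particular preserves coherent objects.

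For the substantive direction, I would assume $\flowerstar$ is coherent and argue by induction on $n \geq 0$ that $\fupperstar$ carries $n$-coherent objects of $\YY$ to $n$-coherent objects of $\XX$. At each inductive step I would apply \Cref{prop:gencoherenceofgeom} with $\YY_0$ taken to be the class of all coherent objects of $\YY$. The first hypothesis of that proposition is supplied by local coherence of $\YY$: every $n$-coherent object of $\YY$ admits a cover by coherent objects, and since such an object is in particular quasicompact one may extract a finite subcover. The second hypothesis is immediate, as $\fupperstar$ sends coherent objects of $\YY$ to coherent objects of $\XX$ by assumption, hence in particular to $n$-coherent objects. For $n = 0$ this closes out the base case, since the third hypothesis of \Cref{prop:gencoherenceofgeom} is vacuous in that case. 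For $n \geq 1$, local coherence of $\XX$ and $\YY$ implies local $(n-1)$-coherence, and the inductive hypothesis provides the required preservation of $(n-1)$-coherent objects.

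I do not expect a serious obstacle: the argument is a direct induction with \Cref{prop:gencoherenceofgeom} as the main tool. The only subtle point is simply to notice that local coherence is a stronger condition than local $n$-coherence for any fixed $n$, so the relevant hypotheses of \Cref{prop:gencoherenceofgeom} remain available throughout the induction.
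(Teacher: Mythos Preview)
Your proposal is correct and is precisely the argument the paper has in mind: the paper states this corollary immediately after \Cref{prop:gencoherenceofgeom} with no explicit proof, simply noting that the proposition yields the equivalence, and your induction with $\YY_0$ taken to be the coherent objects is exactly how one unpacks that remark. One small simplification: hypothesis~(\ref*{prop:gencoherenceofgeom}.1) does not require the cover to be finite (finiteness is extracted inside the proof of \Cref{prop:gencoherenceofgeom} from $0$-coherence of $U$), so you need not invoke quasicompactness when verifying it.
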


\Cref{prop:gencoherenceofgeom} also shows that coherence of a geometric morphism can be checked on a generating set of coherent objects.

\begin{cor}\label{cor:checkcohongen}
	Let $ \flowerstar \colon \fromto{\XX}{\YY} $ be a geometric morphism between locally coherent $ \infty $-topoi.
	Let $ \YY_0 \subset \Obj(\YYcoh) $ be a collection of coherent objects such that for every object $ U \in \YY $ there exists a cover $ \surjto{\coprod_{i \in I} U_i}{U} $ where $ U_i \in \YY_0 $ for each $ i \in I $.
	If for all $ U \in \YY_0 $ the object $ \fupperstar(U) $ is coherent, the geometric morphism $ \flowerstar \colon \fromto{\XX}{\YY} $ is coherent.
\end{cor}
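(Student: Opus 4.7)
My approach is to combine the preceding corollary (which says that, for locally coherent $\infty$-topoi, coherence of $\flowerstar$ is equivalent to $\fupperstar$ preserving $n$-coherent objects for every $n \geq 0$) with an inductive application of \Cref{prop:gencoherenceofgeom} to the generating family $\YY_0$. The goal is therefore to show by induction on $n \geq 0$ that $\fupperstar$ sends $n$-coherent objects of $\YY$ to $n$-coherent objects of $\XX$.

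Before carrying out the induction, I would make two small observations that will feed into every step. First, since each $U \in \YY_0$ is assumed \emph{coherent}, it is $n$-coherent for every $n \geq 0$ by \Cref{def:coherence}. Second, since $\fupperstar(U)$ is coherent for each $U \in \YY_0$ by hypothesis, $\fupperstar(U)$ is likewise $n$-coherent for every $n \geq 0$. Also, the hypothesis that $\YY$ is locally coherent implies that $\YY$ is locally $m$-coherent for every $m$, and likewise for $\XX$, so the local coherence assumptions of \Cref{prop:gencoherenceofgeom} are satisfied at every level.

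For the inductive step, I would fix $n \geq 0$, assume (when $n \geq 1$) that $\fupperstar$ preserves $(n-1)$-coherent objects, and verify the three hypotheses of \Cref{prop:gencoherenceofgeom} with the given collection $\YY_0$: hypothesis (\ref{prop:gencoherenceofgeom}.1) is immediate, since $\YY_0$ is by assumption a collection of $n$-coherent objects generating covers of \emph{every} object of $\YY$, hence in particular of every $n$-coherent one; hypothesis (\ref{prop:gencoherenceofgeom}.2) is the second observation above; and hypothesis (\ref{prop:gencoherenceofgeom}.3), which is only required when $n \geq 1$, is precisely the inductive hypothesis together with the local coherence noted above. \Cref{prop:gencoherenceofgeom} then concludes that $\fupperstar$ preserves $n$-coherent objects, completing the induction.

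I do not anticipate a real obstacle: once one has the preceding corollary and \Cref{prop:gencoherenceofgeom} in hand, this is a clean induction. The only mild subtlety worth pointing out is that the hypothesis provides covers of \emph{all} objects by elements of $\YY_0$ (rather than just of $n$-coherent ones), which is a strictly stronger condition than what \Cref{prop:gencoherenceofgeom} requires, so no additional argument is needed to produce the covers used at each stage.
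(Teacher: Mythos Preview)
Your proposal is correct and matches the paper's intent: the paper states this corollary without proof, indicating only that it follows from \Cref{prop:gencoherenceofgeom}, and the induction on $n$ you outline is exactly the intended (and only reasonable) way to unpack that implication. One minor remark: you do not strictly need the preceding corollary as an intermediary, since showing that $\fupperstar$ preserves $n$-coherent objects for all $n$ already directly implies it preserves coherent objects; but invoking it is harmless.
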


For the next result, we need the following lemma.

\begin{lem}\label{lem:sitecoherence}
	Let $ \fupperstar \colon \fromto{(Y,\tau_Y)}{(X,\tau_X)} $ be a morphism of $ \infty $-sites, and write $ \yo_Y \colon \fromto{Y}{\Sh_{\tau_Y}(Y)} $ for the sheafified Yoneda embedding.
	If the topology $ \tau_X $ is finitary, then
	\begin{equation*}
		\fupperstar \yo_Y \colon \fromto{Y}{\Sh_{\tau_X}(X)}
	\end{equation*}
	factors through $ \Sh_{\tau_X}(X)^{\coh} \subset \Sh_{\tau_X}(X) $.
\end{lem}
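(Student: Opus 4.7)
The plan is to identify $ \fupperstar \yo_Y(y) $ with $ \yo_X(u(y)) $ for each object $ y \in Y $, where $ u \colon \fromto{Y}{X} $ denotes the underlying functor of the given morphism of $ \infty $-sites (which the paper also writes as $ \fupperstar $, by abuse of notation), and then invoke \Cref{prop:SAG.A.3.1.3} to conclude coherence.

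For the identification, recall that the pullback functor $ \fupperstar \colon \fromto{\Sh_{\tau_Y}(Y)}{\Sh_{\tau_X}(X)} $ associated to a morphism of $ \infty $-sites is computed as the composite of the inclusion $ \Sh_{\tau_Y}(Y) \subset \Fun(Y^{\op},\Space) $, left Kan extension $ u_! \colon \fromto{\Fun(Y^{\op},\Space)}{\Fun(X^{\op},\Space)} $ along $ u $, and sheafification $ L_{\tau_X} \colon \fromto{\Fun(X^{\op},\Space)}{\Sh_{\tau_X}(X)} $. The universal property of left Kan extension gives $ u_! \yo_Y(y) \simeq \yo_X(u(y)) $ at the presheaf level, so after sheafifying we obtain $ \fupperstar \yo_Y(y) \simeq \yo_X(u(y)) $ in $ \Sh_{\tau_X}(X) $, where on the right-hand side $ \yo_X $ now denotes the sheafified Yoneda embedding.

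Since $ \tau_X $ is assumed finitary, \Cref{prop:SAG.A.3.1.3} guarantees that $ \yo_X(x) $ is a coherent object of $ \Sh_{\tau_X}(X) $ for every $ x \in X $. Taking $ x = u(y) $ yields that $ \fupperstar \yo_Y(y) $ is coherent for every $ y \in Y $, which is precisely the desired factoring through $ \Sh_{\tau_X}(X)^{\coh} $. The only substantive point is the identification of $ \fupperstar $ with the sheafified left Kan extension along $ u $, but this is essentially built into the definition of the geometric morphism attached to a morphism of $ \infty $-sites, so there is no genuine obstacle; the lemma is really a direct corollary of \Cref{prop:SAG.A.3.1.3}.
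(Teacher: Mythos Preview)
Your proof is correct and follows essentially the same route as the paper: both arguments reduce to the identification $\fupperstar \yo_Y \simeq \yo_X \circ \fupperstar$ (i.e., the commutativity of the square relating the sheafified Yoneda embeddings and the two uses of $\fupperstar$), followed by an application of \Cref{prop:SAG.A.3.1.3}. The only difference is that you spell out why this square commutes via the description of $\fupperstar$ as sheafified left Kan extension, whereas the paper simply asserts the commutative square and proceeds.
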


\begin{proof}
	We have a commutative square 
	\begin{equation*}
		\begin{tikzcd}
			Y \arrow[r, "\pupperstar"] \arrow[d, "\yo_{\!Y}" left] & X \arrow[d, "\yo_{\!\!X}" right] \\
			\Sh_{\tau_Y}(Y) \arrow[r, "\pupperstar" below] & \Sh_{\tau_X}(X)
		\end{tikzcd}
	\end{equation*}
	where the vertical functors are sheafified Yoneda embeddings.
	The claim now follows from the fact that $ \yo_X \colon \fromto{X}{\Sh_{\tau_X}(X)} $ factors through $ \Sh_{\tau_X}(X)^{\coh} $, since the topology $ \tau_X $ is finitary (\Cref{prop:SAG.A.3.1.3}). 
\end{proof}

\begin{cor}\label{cor:morsitescoherent}
	Let $ \fupperstar \colon \fromto{(Y,\tau_Y)}{(X,\tau_X)} $ be a morphism of finitary $ \infty $-sites.
	Then the geometric morphism 
	\begin{equation*}
		\flowerstar \colon \fromto{\Sh_{\tau_X}(X)}{\Sh_{\tau_Y}(Y)}
	\end{equation*}
	is coherent.
\end{cor}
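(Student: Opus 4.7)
The plan is to verify the hypotheses of Corollary \ref{cor:checkcohongen}. The substantive input has already been packaged into the preceding lemmas, so this final corollary is essentially assembly rather than new construction.

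First I would note that by Proposition \ref{prop:SAG.A.3.1.3}, since both $ \tau_Y $ and $ \tau_X $ are finitary topologies, the $ \infty $-topoi $ \Sh_{\tau_Y}(Y) $ and $ \Sh_{\tau_X}(X) $ are both locally coherent. Hence both the standing hypothesis of Corollary \ref{cor:checkcohongen} and the very definition of coherent geometric morphism (\Cref{def:cohgeommor}) make sense in our setting.

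Next, I would take as the distinguished generating family the essential image
\begin{equation*}
	\YY_0 \coloneq \{ \yo_Y(y) : y \in Y \} \subset \Sh_{\tau_Y}(Y)
\end{equation*}
of the sheafified Yoneda embedding. Proposition \ref{prop:SAG.A.3.1.3} tells us that each $ \yo_Y(y) $ is coherent, so $ \YY_0 \subset \Sh_{\tau_Y}(Y)^{\coh} $. To verify the generation hypothesis of Corollary \ref{cor:checkcohongen}, I would invoke the standard fact that every sheaf $ F \in \Sh_{\tau_Y}(Y) $ is a colimit of representables, and in particular that the canonical map $ \coprod_{\yo_Y(y)\to F} \yo_Y(y) \twoheadrightarrow F $ is an effective epimorphism.

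Finally, Lemma \ref{lem:sitecoherence} says precisely that $ \fupperstar \yo_Y $ factors through $ \Sh_{\tau_X}(X)^{\coh} $, which is exactly the condition that $ \fupperstar $ carries $ \YY_0 $ into coherent objects. Applying Corollary \ref{cor:checkcohongen} then yields the desired coherence of $ \flowerstar $. I do not expect any real obstacle here: the content of the result lies upstream, in Proposition \ref{prop:SAG.A.3.1.3} (which identifies representables as coherent generators), in Lemma \ref{lem:sitecoherence} (whose commutative square gives $ \fupperstar\yo_Y \simeq \yo_X \fupperstar $), and above all in Proposition \ref{prop:gencoherenceofgeom} (the inductive coherence-checking criterion underlying Corollary \ref{cor:checkcohongen}).
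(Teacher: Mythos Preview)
Your proof is correct and follows essentially the same route as the paper: invoke \Cref{prop:SAG.A.3.1.3} for local coherence, take $\YY_0$ to be the representable sheaves (which generate under colimits and hence cover every object), and then apply \Cref{lem:sitecoherence} together with \Cref{cor:checkcohongen}. Your write-up is slightly more explicit about the generation hypothesis, but the argument is the same.
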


\begin{proof}
	By \Cref{prop:SAG.A.3.1.3}, both $ \Sh_{\tau_X}(X) $ and $ \Sh_{\tau_Y}(Y) $ are locally coherent.
	The image $ \yo_Y(Y) $ of $ Y $ under the sheafified Yoneda embedding generates $ \Sh_{\tau_Y}(Y) $ under colimits, so by \Cref{cor:checkcohongen} it suffices to check that $ \fupperstar $ carries objects in $ \yo_Y(Y) $ to coherent objects of $ \XX $; this the content of \Cref{lem:sitecoherence}.
\end{proof}

\begin{ntn}
	Write $ \Top_{\infty}^{1,\coh} \subset \Topcoh $ for the full subcategory spanned by the $ 1 $-localic coherent $ \infty $-topoi.
\end{ntn}

\noindent \Cref{cor:morsitescoherent} and the definitions immediately imply the following:

\begin{prp}\label{prop:coherent1localic}
	The equivalence of $ \infty $-categories $ \tau_{\leq 0} \colon \equivto{\Top_{\infty}^{1}}{\Top} $ (\Cref{cnstr:localictopoi}) restricts to an equivalence
	\begin{equation*}
		\tau_{\leq 0} \colon \equivto{\Top_{\infty}^{1,\coh}}{\Topcohord}
	\end{equation*}
\end{prp}

% \begin{proof}
% 	If $ \XX $ is a coherent $ \infty $-topos, it follows immediately from the definitions that the ordinary topos $ \tau_{\leq 0} \XX $ is coherent, and if $ \flowerstar \colon \fromto{\XX}{\YY} $ is a geometric morphism between $ 1 $-localic coherent $ \infty $-topoi, then the induced geometric morphism 
% 	\begin{equation*}
% 		\flowerstar \colon \fromto{\tau_{\leq 0} \XX}{\tau_{\leq 0} \YY}
% 	\end{equation*}
% 	is coherent.
% 	Hence $ \tau_{\leq 0} $ restricts to a functor 
% 	\begin{equation*}
% 		\tau_{\leq 0} \colon \fromto{\Top_{\infty}^{1,\coh}}{\Top_{1}^{\coh}} \period
% 	\end{equation*}
% 	The essentially surjectivity of $ \tau_{\leq 0} $ is immediate from the characterization of coherent ordinatry topoi as sheaves of sets on a finitary site, and the full faithfulness of $ \tau_{\leq 0} $ is immediate from \Cref{cor:morsitescoherent}.
% \end{proof}

\begin{cor}\label{cor:coherent1localicmors}
	The following are equivalent for a geometric morphism $ \flowerstar \colon \fromto{\XX}{\YY} $ between $ 1 $-localic coherent $ \infty $-topoi:
	\begin{enumerate}[{\upshape (\ref*{cor:coherent1localicmors}.1)}]
		\item The geometric morphism $ \flowerstar \colon \fromto{\XX}{\YY} $ is coherent.

		\item The pullback functor $ \fupperstar \colon \fromto{\YY}{\XX} $ carries $ 0 $-truncated $ 1 $-coherent objects of $ \YY $ to $ 1 $-coherent objects of $ \XX $.
	\end{enumerate}
\end{cor}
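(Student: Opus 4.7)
The plan is to reduce both directions to a statement about the induced ordinary geometric morphism $\tau_{\leq 0} f$, which can then be handled via \Cref{prop:coherent1localic}. The crux is the following identification: for a $1$-localic coherent $\infty$-topos $\XX$ and a $0$-truncated object $U \in \XX$, the object $U$ is $1$-coherent in $\XX$ if and only if $U$ is coherent as an object of the underlying ordinary topos $\tau_{\leq 0} \XX$.

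To establish this identification, I will first observe that for $U \in \XX$ $0$-truncated, the slice $\XX_{/U}$ is again $1$-localic, with underlying ordinary topos $(\tau_{\leq 0} \XX)_{/U}$. By definition $U$ is $1$-coherent in $\XX$ precisely when $\XX_{/U}$ is $1$-coherent as an $\infty$-topos. For a $1$-localic $\infty$-topos, $1$-coherence coincides with full $\infty$-toposic coherence, since all of the relevant data---existence of covers by quasicompact objects and closure of quasicompact objects under finite products---is entirely determined by the underlying ordinary topos. Applying \Cref{prop:coherent1localic} to $\XX_{/U}$, coherence of $\XX_{/U}$ as an $\infty$-topos is equivalent to coherence of $(\tau_{\leq 0} \XX)_{/U}$ as an ordinary topos, i.e., to $U$ being coherent in $\tau_{\leq 0} \XX$.

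Granting the identification, $(1) \Rightarrow (2)$ is immediate: a coherent $\flowerstar$ preserves coherent objects, and by the identification any $0$-truncated $1$-coherent object of $\YY$ is already coherent, so $\fupperstar$ sends it to a coherent and therefore $1$-coherent object of $\XX$. For $(2) \Rightarrow (1)$: since $\fupperstar$ is left exact it preserves $0$-truncated objects, so assumption (2) implies $\fupperstar$ carries $0$-truncated $1$-coherent objects of $\YY$ to $0$-truncated $1$-coherent objects of $\XX$. Passing through the identification in both $\YY$ and $\XX$, this precisely says that the induced ordinary geometric morphism $\tau_{\leq 0} f$ sends coherent objects of $\tau_{\leq 0} \YY$ to coherent objects of $\tau_{\leq 0} \XX$, so $\tau_{\leq 0} f$ is a coherent geometric morphism of ordinary topoi; \Cref{prop:coherent1localic} then yields coherence of $f$ itself.

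The hardest step will be the identification, specifically verifying that $1$-coherence and full $\infty$-toposic coherence coincide for $1$-localic $\infty$-topoi. I anticipate handling this by induction on $n$, using that in the $1$-localic setting the recursive definition of $n$-coherence stabilizes at $n = 1$ because the data governing $n$-coherence (covers by and finite products of quasicompact objects) lives entirely in the underlying ordinary topos; alternatively, for the implication one actually needs in $(2) \Rightarrow (1)$, one can appeal directly to \Cref{cor:checkcohongen} with $\YY_0$ the coherent objects of $\tau_{\leq 0}\YY$, which generate $\YY$ under covers by \Cref{thm:classcoherent1topoi} since $\YY \simeq \Sh_{\eff}((\tau_{\leq 0}\YY)^{\coh}; \Space)$.
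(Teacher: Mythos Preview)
The paper gives no explicit proof of this corollary, treating it as immediate from \Cref{prop:coherent1localic}; your plan to unwind this via the identification ``$U$ is $0$-truncated and $1$-coherent in $\XX$ if and only if $U$ is coherent in $\tau_{\leq 0}\XX$'' is exactly the right way to spell out what the paper leaves implicit, and your deduction of both directions from that identification is correct.

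The one weak point is your route to the identification. Your intermediate claim---that for an arbitrary $1$-localic $\infty$-topos, $1$-coherence coincides with full coherence---is not justified by the sentence you offer: the data defining $1$-coherence of $\XX_{/U}$ (quasicompact objects and closure under finite products) and the data defining coherence of the underlying ordinary topos (quasicompact \emph{quasiseparated} objects and closure under finite products) are genuinely different conditions on $\tau_{\leq 0}(\XX_{/U})$, so ``the relevant data lives in the ordinary topos'' does not by itself show they agree. You do not need this general claim. It is cleaner to prove the identification directly: for $0$-truncated $U$ in a $1$-localic coherent $\XX$, unpack ``$U$ is $1$-coherent'' as ``$U$ is $0$-coherent and $0$-coherent objects over $U$ are closed under fiber products'' (local $0$-coherence being automatic since $\XX$ is coherent), observe that for $0$-truncated objects $0$-coherence in $\XX$ agrees with quasicompactness in $\tau_{\leq 0}\XX$, and conclude that $U$ is quasicompact and quasiseparated in $\tau_{\leq 0}\XX$; then the site description $\XX \equivalent \Sheff{(\tau_{\leq 0}\XX)^{\coh}}$ together with \Cref{prop:SAG.A.3.1.3} shows $U$ is coherent in $\XX$. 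Your alternative via \Cref{cor:checkcohongen} is fine too, but note that it still needs this same fact on the $\XX$-side to upgrade ``$\fupperstar(U)$ is $1$-coherent'' to ``$\fupperstar(U)$ is coherent,'' so it does not sidestep the identification.
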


\begin{rmk}
	If $ n \geq 2 $, there doesn't already exist a notion of `coherent $ n $-topos' (other than saying that the corresponding $ n $-localic $ \infty $-topos is coherent).
	However, if one declares that an $ n $-topos $ \XX $ is `coherent' if $ \XX $ is `$ (n+1) $-coherent', then \Cref{cor:morsitescoherent} allows one to immediately deduce variants of \Cref{prop:coherent1localic,cor:coherent1localicmors} for coherent $ n $-topoi.
	Sections 5.4 through 5.6 of the newest version of \cite{exodromy} address this more general point.
\end{rmk}

%-------------------------------------------------------------------%
%  The ∞-pretopos associated to an ordinary pretopos                %
%-------------------------------------------------------------------%

\subsection{The \texorpdfstring{$\infty$}{∞}-pretopos associated to an ordinary pretopos}

In this subsection we exploit the equivalence of \Cref{prop:coherent1localic} to show how to associate a bounded $ \infty $-pretopos to an essentially small pretopos.
Lurie briefly touches upon this point (without details) in \cite{Lurie:CatLogic30}.

\begin{nul}
	If $ \XX $ is a bounded coherent $ \infty $-topos, then the associated ordinary topos $ \tau_{\leq 0} \XX $ is coherent.
	Moreover, if $ \flowerstar \colon \fromto{\XX}{\YY} $ is a coherent geometric morphism of bounded coherent $ \infty $-topoi, then the induced geometric morphsim $ \flowerstar \colon \fromto{\tau_{\leq 0} \XX}{\tau_{\leq 0} \YY} $ is a coherent geometric morphism of ordinary topoi.
	Hence the adjunction $ \Top_{\infty} \rightleftarrows \Top $
	% \begin{equation*}
	% 	\begin{tikzcd}[column sep=2.5em]
	% 		\Top_{\infty} \arrow[r, shift left, "\tau_{\leq 0}"] & \Top \arrow[l, hooked', shift left]
	% 	\end{tikzcd}	
	% \end{equation*}
	restricts to an adjunction
	\begin{equation}\label{eq:cohadjunction}
		\begin{tikzcd}[column sep=2.5em]
			\Topbc \arrow[r, shift left, "\tau_{\leq 0}"] & \Topcohord \period \arrow[l, hooked', shift left]
		\end{tikzcd}	
	\end{equation}
\end{nul}

\begin{nul}
	Transporting the adjunction \eqref{eq:cohadjunction} across the equivalences 
	\begin{equation*}
		 (-)^{\coh} \colon \equivto{\Topcohord}{\preTopord^{\op}} \andeq (-)_{<\infty}^{\coh} \colon \equivto{\Topbc}{\preTop^{\bdd,\op}}
	\end{equation*}
	of \Cref{thm:classcoherent1topoi,thm:SAG.A.7.5.3} we see that the functor $ \tau_{\leq 0} \colon \fromto{\preTop^{\bdd}}{\preTopord} $ admits a fully faithful \textit{right} adjoint
	\begin{equation*}
		(-)^{+} \colon \incto{\preTopord}{\preTop^{\bdd}} 
	\end{equation*}
	given by $ X^{+} \coloneq \Sheff{X}_{<\infty}^{\coh} $.
\end{nul}

\begin{exm}
	The bounded $ \infty $-pretopos $ \Fin^{+} $ associated to the pretopos $ \Fin $ of finite sets is the $ \infty $-pretopos $ \Space_{\pi} $ of $ \pi $-finite spaces.
\end{exm}

%-------------------------------------------------------------------%
%  Examples from algebraic geometry                                 %
%-------------------------------------------------------------------%

\subsection{Examples from algebraic geometry}

We conclude with a few examples from algebraic geometry that \Cref{cor:morsitescoherent} puts on the same footing.

\begin{exm} 
	For a spectral topological space\footnote{A topological space $ S $ is \defn{spectral} if and only if $ S $ is homeomorphic to the underlying topological space of a quasicompact quasiseparated scheme.} $ S $, write $ \Open^{\qc}(S) \subset \Open(S) $ for the locale  of quasicompact opens in $ S $.
	Since the quasicompact opens of $ S $ form a basis for the topology on $ S $ that is closed under finite intersections, the $ \infty $-topos $ \Sh(\Open^{\qc}(S)) $ is $ 0 $-localic.
	Applying \cite[Proposition B.6.4]{Ultracategories} we see that the inclusion $ \Open^{\qc}(S) \subset \Open(S) $ induces an equivalence of $ 0 $-localic $ \infty $-topoi
	\begin{equation*}
		\Sh(S) \equivalent \Sh(\Open^{\qc}(S)) \period
	\end{equation*}
	The Grothendieck topology on \smash{$ \Open^{\qc}(S) $} is finitary, so the $ \infty $-topos $ \Sh(S) $ of sheaves on $ S $ is a coherent $ \infty $-topos.
	(Cf. \SAG{Lemma}{2.3.4.1}).

	If $ f \colon \fromto{S}{T} $ is a quasicompact continuous map of spectral topological spaces, the inverse image map $ f^{-1} \colon \fromto{\Open(T)}{\Open(S)} $ restricts to a map
	\begin{equation*}
		f^{-1} \colon \fromto{\Open^{\qc}(T)}{\Open^{\qc}(S)} \period
	\end{equation*}
	\Cref{cor:morsitescoherent} sho\-ws that the induced geometric morphism $ \flowerstar \colon \fromto{\Sh(S)}{\Sh(T)} $ is coherent.
	Since spectral topological spaces are sober, a continuous map $ f \colon \fromto{S}{T} $ of spectral topological spaces induces a coherent geometric morphism on the level of $ \infty $-topoi if and only if $ f $ is quasicompact.
\end{exm}

\begin{nul}
	If $ \XX $ is a coherent $ \infty $-topos, then the underlying topological space of $ \XX $ is spectral \cite[Chapter II, \S\S3.3--3.4]{MR861951}.
\end{nul}

Combining the fact that the Zariski, Nisnevich\footnote{For background on the Nisnevich topology, see \cites[\SAGsec{3.7}]{SAG}{Hoyois:Nisnevichnotes}{Hoyois:Nisnevichagree}{MR1045853}.}, étale, and proétale\footnote{For background on the proétale topology, see \cites[Tags \href{http://stacks.math.columbia.edu/tag/0988}{0988} \& \href{http://stacks.math.columbia.edu/tag/099R}{099R}]{stacksproject}{BhattScholzeProEtale}.} topoi of a scheme all have the same underlying topological space with the fact that if a scheme $ X $ is quasicompact and quasiseparated, then the topoi of sheaves on $ X $ in each of these topologies is coherent \cites[\SAGthm{Proposition}{2.3.4.2} \& \SAGthm{Remark}{3.7.4.2}]{SAG}[Appendix A]{MotivicNorms:BachmannHoyois}[Example 7.1.7]{Ultracategories}, we deduce the following: 

\begin{prp}\label{prop:coherentschemetopos}
	The following are equivalent for a scheme $ X $:
	\begin{enumerate}[{\upshape (\ref*{prop:coherentschemetopos}.1)}]
		\item The scheme $ X $ is quasicompact and quasiseparated.

		\item The Zariski $ \infty $-topos $ X_{\zar} $ of $ X $ is a coherent $ \infty $-topos.

		\item The Nisnevich $ \infty $-topos $ X_{\Nis} $ of $ X $ is a coherent $ \infty $-topos.

		\item The étale $ \infty $-topos $ X_{\et} $ of $ X $ is a coherent $ \infty $-topos.

		\item The proétale $ \infty $-topos $ X_{\proet} $ of $ X $ is a coherent $ \infty $-topos.
	\end{enumerate}
\end{prp}

% \begin{nul}
% 	In light of \Cref{prop:coherentschemetopos} (and following the Grothendieck school \cites[Exposé VI, Exemples 1.22]{MR50:7131}[Exposé XVII, 0.12]{MR50:7132}{MR3309086}{MR2249998}) we say that a scheme $ X $ is \defn{coherent} if and only if $ X $ is quasicompact and quasiseparated.
% \end{nul}

\begin{exm}[{\cite[Example 10.4.13]{exodromy}}]
	Let $ X $ be a quasicompact quasiseparated scheme.
	Then the bounded $ \infty $-pretopos of truncated coherent objects of the coherent $ \infty $-topos $ X_{\et} $ is the $ \infty $-category of constructible étale sheaves of spaces on $ X $.
\end{exm}

\begin{exm}
	Let $ f \colon \fromto{X}{Y} $ be a morphism of quasicompact quasiseparated schemes and let $ \tau \in \{\zar,\Nis,\et,\proet\} $.
	Then the induced geometric morphism $ \flowerstar \colon \fromto{X_{\tau}}{Y_{\tau}} $ on $ \infty $-topoi of $ \tau $-sheaves is a coherent geometric morphism of coherent $ \infty $-topoi. 
\end{exm}

\begin{exm}
	Let $ X $ be a quasicompact quasiseparated scheme.
	Then the natural geometric morphisms
	\begin{equation*}
		\fromto{X_{\proet}}{X_{\et}} \comma \qquad \fromto{X_{\et}}{X_{\Nis}} \comma \andeq \fromto{X_{\Nis}}{X_{\zar}}
	\end{equation*}
	are all coherent geometric morphisms of coherent $ \infty $-topoi.
\end{exm}

%-------------------------------------------------------------------%
%-------------------------------------------------------------------%
%-------------------------------------------------------------------%
%  References                                                       %
%-------------------------------------------------------------------%
%-------------------------------------------------------------------%
%-------------------------------------------------------------------%

% Separates bibliography into two parts: the specially labeled references followed by the numbered references. The specially labeled references appear with just the label; numbered references appear as "n."
\DeclareFieldFormat{labelnumberwidth}{#1}
\printbibliography[keyword=alph]
\addcontentsline{toc}{section}{References} % Adds References to table of contents
\DeclareFieldFormat{labelnumberwidth}{{#1\adddot\midsentence}}
\printbibliography[heading=none, notkeyword=alph]

\end{document}